\documentclass[12pt]{amsart}
\usepackage{latexsym, amssymb, amsmath,mathtools}
\usepackage{color}
\usepackage[normalem]{ulem}

\newtheorem{thm}{Theorem}[section]
\newtheorem{lem}[thm]{Lemma}
\newtheorem{cor}[thm]{Corollary}
\newtheorem{prop}[thm]{Proposition}
\newtheorem{rem}[thm]{Remark}
\theoremstyle{definition}
\newtheorem{defn}[thm]{Definition}



\begin{document}

\title[Conformal solitons]
{Classification of conformal solitons in pseudo-Euclidean spaces}


\author{Burcu Bekta\c{s} Demirci}
\address{Fatih Sultan Mehmet Vak{\i}f University, Hali\c{c} Campus, Faculty of Engineering,
Department of Civil Engineering, 34445, Beyo\u{g}lu, \.{I}stanbul, Turkey}
\curraddr{}
\email{bbektas@fsm.edu.tr}

\author{Shunya Fujii}
\address{Department of Mathematics,
 Shimane University, Nishikawatsu 1060 Matsue, 690-8504, Japan.}
\curraddr{}
\email{shunyaf3@gmail.com}

\author{Shun Maeta}
\address{Department of Mathematics,
Faculty of Education,
Chiba University, 1-33, Yayoicho, Inage-ku, Chiba-shi, Chiba, 263-8522 Japan.}
\curraddr{}
\email{shun.maeta@gmail.com~{\em or}~shun.maeta@chiba-u.jp}
\thanks{The third author is partially supported by the Grant-in-Aid for Young Scientists, No.19K14534, Japan Society for the Promotion of Science.}

\subjclass[2010]{53C25, 53C40, 53C42}

\date{}

\dedicatory{}

\commby{}

\keywords{Yamabe solitons; conformal solitons; pseudo-Euclidean spaces}
\begin{abstract}
In this paper, we completely classify conformal solitons on pseudo--Riemannian hypersurfaces in pseudo--Euclidean spaces 
arisen from the position vector field.
In particular, the classification of Yamabe solitons on pseudo--Riemannian hypersurfaces in pseudo--Euclidean spaces arisen from the position vector field can be obtained.
\end{abstract}

\maketitle


\bibliographystyle{amsplain}

\section{Introduction}\label{intro} 
Soliton solutions have been developed rapidly in recent years as important object in geometric flows. In fact, they appear as possible singularity models. 
Therefore, one of the most interesting problem is the classification problem of them.
For Ricci solitons, that is, the special solutions of the Ricci flow. S. Brendle \cite{Brendle} brought significant progress. He showed that "any 3-dimensional complete noncompact $\kappa$-noncollapsed gradient steady Ricci soliton with positive curvature is the Bryant soliton" which is a famous conjecture of G. Perelman \cite{Perelman1}.

For Yamabe solitons, that is, the special solutions of the Yamabe flow, there are many classification results under some assumptions of the curvatures (cf. \cite{CSZ12}, \cite{CMM12}, \cite{2}).
Recently, the third author classified nontrivial 3-dimensional complete gradient Yamabe solitons \cite{Maeta25}.
To understand the Yamabe soliton, many generalizations of it have been introduced.
For example, almost Yamabe solitons \cite{BB13} (see also \cite{SM19}), gradient $k$-Yamabe solitons \cite{CMM12}, $h$-almost gradient Yamabe solitons \cite{Zeng20} have been introduced. 
The most general notion of Yamabe solitons might be conformal solitons \cite{FM21} (see also \cite{Yano40}, \cite{Tashiro65}):
\begin{defn}[\cite{FM21}]
A Riemannian manifold $(M,g)$ is called a {\em conformal soliton} if there exists a complete vector field $v$ such that 
\begin{equation}\label{CS}
\varphi g=\frac{1}{2}\mathcal{L}_vg,
\end{equation} 
where $\varphi:M\rightarrow \mathbb{R}$ is a smooth function and $\mathcal{L}_v g$ 
is the Lie derivative of the metric tensor $g$ with respect to $v$. 
If $v\equiv0$, then the conformal soliton is called trivial.
\end{defn}
J. Cheeger and T. H. Colding studied conformal gradient solitons and gave a characterization theorem \cite{CC96}. 
Recently, the second and the third authors classified conformal solitons on hypersurfaces in Euclidean spaces under some assumptions \cite{FM21}.

Although the notion of conformal solitons was given and studied in a Riemannian manifold, the conformal soliton equation 
\eqref{CS} can be defined in a pseudo--Riemannian manifold as follows.

\begin{defn}
A pseudo--Riemannian manifold $(M,g)$ is called a {\em conformal soliton} if there exists a complete vector field $v$ such that 
\begin{equation}\label{CSn}
\varphi g=\frac{1}{2}\mathcal{L}_vg,
\end{equation} 
where $\varphi:M\rightarrow \mathbb{R}$ is a smooth function and $\mathcal{L}_v g$ 
is the Lie derivative of the metric tensor $g$ with respect to $v$. 
We denote the conformal soliton by $(M,g,v,\varphi)$.
If $v\equiv0$, then $(M,g,v,\varphi)$ is called trivial.
If  $v$ is the gradient of some smooth function $f$ on $M$, then $(M,g,f)$ is called a conformal gradient soliton. 
For smooth functions $f$ and $\varphi$, we have $\varphi g=\nabla\nabla f.$
\end{defn}

\begin{rem}
Conformal solitons include Yamabe solitons, almost Yamabe solitons, gradient $k$-Yamabe solitons, $h$-almost Yamabe solitons and conformal gradient solitons. Therefore, {\bf all the results in this paper can be applied to all these solitons.}

\end{rem}

In \cite{BCGG}, M. Brozos-V\'{a}zquez et. al. studied three dimensional Lorentzian homogeneous Ricci
soliton and the first author \cite{BBD} also classified the Ricci soliton on a pseudo--Riemannian 
hypersurface of a Minkowski space $\mathbb{E}^4_1$ whose the potential vector field 
is the tangential part of the position vector.

The position vector $V$ of a pseudo--Riemannian submanifold in the pseudo--Euclidean space
is the most elementary and geometric object, see \cite{Chen3, Chen4}. 
A. Fialkow \cite{Fialkow} introduced the notion of concircular vector field $v$ on a Riemannian 
manifold $M$ as vector field which satisfy 
\begin{equation}
\nabla_X v=\mu X,\;\;X\in TM
\end{equation}
where $\nabla$ denotes the Levi--Civita connection of $M$, 
$TM$ is the tangent bundle of $M$ 
and $\mu$ is a nontrivial function on $M$. 
As a particular case for the function $\mu=1$, a concircular vector 
field $v$ is called a concurrent vector field. The notion of 
concircular vector fields on Riemannian manifolds can be extended naturally to concircular vector fields
on pseudo--Riemannian manifolds. 
There are several studies of concurrent vector fields (see for example \cite{BY74}, \cite{5} and \cite{6}). 
The position vector field $V$ on a pseudo-Euclidean space $\mathbb{E}_s^m$ satisfies $\nabla_X V=X$, for any vector field $X$. Therefore, the position vector $V$ is the best known example as concurrent vector field. 

In this paper, we completely classify conformal solitons on a pseudo--Riemannian hypersurface in a pseudo-Euclidean space 
$\mathbb{E}^{n+1}_s$ arisen from the position vector field $V$. 

\begin{thm}\label{main}
Any conformal soliton $(M,g,V^T,\varphi)$ on a pseudo--Riemannian hypersurface $M$
 in a pseudo-Euclidean space 
$\mathbb{E}^{n+1}_s$ is contained in a hyperplane, a conic hypersurface, a pseudo hyperbolic space, or a pseudo sphere.
\end{thm}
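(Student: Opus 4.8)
The plan is to reduce the soliton equation \eqref{CSn} for $v=V^T$ to a pointwise algebraic identity between the shape operator of $M$ and the support function of $M$, and then to recognise the four listed hypersurfaces.

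\emph{Reducing the equation.} Let $\xi$ be a unit normal of $M$ in $\mathbb E^{n+1}_s$, put $\ep:=\met(\xi,\xi)=\pm1$, let $A$ be the corresponding shape operator and $H=\tfrac1n\,\mathrm{tr}\,A$ the mean curvature, and decompose the position vector along $M$ as $V=V^T+V^\perp$ with $V^\perp=\ep\rho\,\xi$, where $\rho:=\met(V,\xi)$. Since $V$ is a concurrent vector field, $\onabla_X V=X$ for every tangent $X$; combining this with the Gauss and Weingarten formulas and separating the tangential and normal components yields
\[
\nabla_X V^T = X + \ep\rho\,AX, \qquad \grad\rho = -A V^T .
\]
Hence, for tangent $X,Y$,
\[
\tfrac12(\mathcal L_{V^T}g)(X,Y) = \met(\nabla_X V^T,Y) = \met(X,Y) + \ep\rho\,\met(AX,Y),
\]
so that, by nondegeneracy of $g$, \eqref{CSn} is equivalent to the pointwise identity $\ep\rho\,A=(\varphi-1)\,\mathrm{Id}$. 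Taking the trace gives $\varphi-1=\ep\rho\,H$, and substituting back produces $\rho\,(A-H\,\mathrm{Id})=0$ on $M$: at each point either $\rho$ vanishes or $M$ is umbilic there.

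\emph{The two cases.} If $\rho\equiv0$ on (connected) $M$, then $V=V^T$ is everywhere tangent to $M$; since the flow of the position vector is the scaling $x\mapsto e^t x$ of $\mathbb E^{n+1}_s$, tangency makes $M$ scaling-invariant, hence contained in a conic hypersurface with vertex at the origin. Otherwise $U:=\{\rho\neq0\}$ is a nonempty open set on which $A=\lambda\,\mathrm{Id}$ with $\lambda=(\varphi-1)/(\ep\rho)$. For a hypersurface of the flat space $\mathbb E^{n+1}_s$ the Codazzi equation reads $(\nabla_X A)Y=(\nabla_Y A)X$, which for $A=\lambda\,\mathrm{Id}$ becomes $X(\lambda)Y=Y(\lambda)X$; when $n\ge2$ this forces $\lambda$ to be locally constant on $U$, hence constant on each component, and by continuity $A=\lambda\,\mathrm{Id}$ with the same constant on $\overline U$. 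If $\lambda=0$ then $M$ is totally geodesic on this piece, hence lies in a hyperplane. If $\lambda\neq0$, then $\onabla_X(V+\lambda^{-1}\xi)=X+\lambda^{-1}(-\lambda X)=0$, so $q:=V+\lambda^{-1}\xi$ is a constant vector of $\mathbb E^{n+1}_s$ and $\met(V-q,V-q)=\ep\,\lambda^{-2}$; thus $M$ lies on the pseudo sphere of radius $|\lambda|^{-1}$ about $q$ if $\ep=1$, and on the pseudo hyperbolic space if $\ep=-1$.

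\emph{Globalizing, and the main difficulty.} It remains to show that, $M$ being connected, exactly one of these conclusions holds. The set $\{\rho=0\}$ and the umbilic locus $\{A=H\,\mathrm{Id}\}$ are closed and cover $M$; on the interior of the non-umbilic set one has $\rho\equiv0$ and the cone analysis applies, while across the common boundary the umbilic identity $A=\lambda\,\mathrm{Id}$ — and with it the constancy of $\lambda$, equivalently of $H$ — propagates by continuity, so a connectedness argument pins $M$ down to a single type: a hyperplane, a conic hypersurface, a pseudo hyperbolic space, or a pseudo sphere. The step I expect to demand the most care is precisely this globalization: excluding genuine mixtures of the umbilic regime and the "position-vector-tangent" regime, and upgrading the \emph{locally} constant mean curvature furnished by Codazzi to a \emph{global} constant — which is where connectedness and the hypothesis $n\ge2$ enter in an essential way. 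The underlying classification of totally umbilic nondegenerate hypersurfaces of $\mathbb E^{n+1}_s$, carried out with due attention to the causal character $\ep$ of the normal, is then routine.
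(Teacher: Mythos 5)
Your local analysis is correct and is essentially the paper's: the identity $\ep\rho\,A=(\varphi-1)\,\mathrm{Id}$ is Lemma~\ref{NSAYS}, the trace/Codazzi argument on $\{\rho\neq0\}$ is Case~1 of the paper's proof, and the tangency-implies-cone step is Proposition~\ref{conic}. But the globalization, which you explicitly defer (``a connectedness argument pins $M$ down to a single type''), is a genuine gap, not a routine step. Continuity alone does not exclude a mixture: a round pseudo-sphere not centered at the origin already has points where $\rho=0$ while being umbilic with $H\neq0$ everywhere, so the mere coexistence of the two regimes at a point carries no contradiction, and your proposed ``propagation across the common boundary'' gives you nothing to propagate \emph{into} the interior of $\{\rho=0\}$. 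What must actually be ruled out is $\mathrm{Int}\{\rho=0\}\neq\emptyset$ together with $\{\rho\neq0\}\neq\emptyset$, and for that one needs a quantity that distinguishes the two regimes on open sets.

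The paper supplies two missing ingredients. First, on a maximal component $\Omega$ of $\{\rho\neq0\}$ the hyperplane alternative is excluded by a separate argument: there $\rho$ is a \emph{nonzero} constant, so by continuity $\rho$ cannot vanish on $\partial\Omega$, forcing $\Omega$ to be clopen and hence all of $M$ --- so in the genuinely mixed case every such component has $H\neq0$. Second, set $f=\det A$. On $\mathrm{Int}\{\rho=0\}$ one differentiates $V=V^T$ to get $h(X,V^T)=0$, hence $AV^T=0$ and $f\equiv0$ there; on $\{\rho\neq0\}$ one has $f=(\ep H)^n\neq0$ locally constant. Thus $df=0$ on the dense open set $M\setminus\partial\{\rho=0\}$, so $f$ is constant on connected $M$, contradicting $f=0$ on one nonempty open set and $f\neq0$ on another. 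This forces $\mathrm{Int}\{\rho=0\}=\emptyset$, after which the locally constant principal curvature extends to a single global constant and $M$ lies in one pseudo-sphere or pseudo-hyperbolic space. Without an argument of this kind (or an equivalent unique-continuation-type substitute), your proof establishes only the local trichotomy, not the stated global classification.
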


As a corollary, we completely classify Yamabe solitons on a pseudo--Riemannian hypersurface in a pseudo-Euclidean space arisen from the position vector field.

\begin{cor}
Any Yamabe soliton $(M,g,V^T,\varphi)$ on a pseudo--Riemannian hypersurface $M$
 in a pseudo-Euclidean space 
$\mathbb{E}^{n+1}_s$ is contained in a hyperplane, a conic hypersurface, a pseudo hyperbolic space, or a pseudo sphere.
\end{cor}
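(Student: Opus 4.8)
The Corollary will follow at once from Theorem~\ref{main}: a Yamabe soliton is, by definition, a pseudo--Riemannian manifold $(M,g)$ equipped with a complete vector field $v$ and a constant $\lambda$ satisfying $(R_g-\lambda)\,g=\tfrac12\mathcal{L}_v g$, where $R_g$ is the scalar curvature, and setting $\varphi:=R_g-\lambda$ turns this into the conformal soliton equation \eqref{CSn}. Thus every Yamabe soliton $(M,g,V^T,\lambda)$ whose potential field is the tangential part of the position vector is in particular a conformal soliton $(M,g,V^T,\varphi)$ with the same potential field, so Theorem~\ref{main} applies verbatim; the same remark covers the almost Yamabe, $k$-Yamabe, $h$-almost Yamabe and conformal gradient solitons mentioned above. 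Hence the only real content is Theorem~\ref{main}, and I now sketch how I would prove it.

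Let $\xi$ be a local unit normal field along the pseudo--Riemannian hypersurface $M\subset\mathbb{E}^{n+1}_s$, with $\langle\xi,\xi\rangle=\varepsilon\in\{-1,+1\}$, let $A$ be the associated shape operator, and decompose the position vector as $V=V^T+\rho\,\xi$ with $\rho=\varepsilon\langle V,\xi\rangle$ the support function. The first step is to exploit that $V$ is concurrent, i.e. $\wnabla_X V=X$ for all $X\in TM$: inserting $V=V^T+\rho\,\xi$ and splitting into tangential and normal parts via the Gauss and Weingarten formulas gives
\begin{equation*}
\nabla_X V^T=X+\rho\,AX,\qquad \nabla\rho=-\varepsilon\,A V^T .
\end{equation*}

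The second step is to feed the first identity into the soliton equation; since $A$ is $g$-self-adjoint,
\begin{equation*}
(\mathcal{L}_{V^T}g)(X,Y)=g(\nabla_X V^T,Y)+g(\nabla_Y V^T,X)=2g(X,Y)+2\rho\,g(AX,Y),
\end{equation*}
so \eqref{CSn} becomes the pointwise identity $\rho\,A=(\varphi-1)\,\mathrm{id}$, and the argument then splits on the vanishing of $\rho$. Where $\rho\equiv0$ the position vector is everywhere tangent to $M$, forcing $M$ to be ruled by the lines through the origin, hence an open piece of a conic hypersurface (a hyperplane through the origin when $A\equiv0$); here $\varphi\equiv1$. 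Where $\rho\neq0$ one gets $A=\lambda\,\mathrm{id}$ with $\lambda=(\varphi-1)/\rho$, so $M$ is totally umbilical; the Codazzi equation for a hypersurface of the flat ambient space reduces to $(X\lambda)Y=(Y\lambda)X$, which makes $\lambda$ locally constant as soon as $n\geq2$, and the classification of totally umbilical hypersurfaces of a pseudo-Euclidean space then delivers a hyperplane when $\lambda=0$ and a pseudo hyperbolic space or a pseudo sphere when $\lambda\neq0$ (the sign of $\varepsilon\lambda^2$ deciding which).

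The hard part will be the gluing step: assembling these local alternatives into one global statement about the connected hypersurface $M$. I expect the key to be that integrating $\nabla\rho=-\varepsilon\lambda V^T$ together with $\langle V^T,X\rangle=\tfrac12\,X|V|^2$ on a component of $\{\rho\neq0\}$ produces $\rho=c-\tfrac{\varepsilon\lambda}{2}|V|^2$ for a constant $c$, and that a genuine cone can never be totally umbilical with a nonzero umbilic factor; combined with the connectedness of $M$ and the continuity of $A$ across $\partial\{\rho\neq0\}$ this should pin down a single model hypersurface containing all of $M$. The low--dimensional case $n=1$, the tracking of causal characters, and the dependence on the index $s$ would be handled separately.
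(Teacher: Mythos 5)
Your reduction of the Corollary to Theorem~\ref{main} --- noting that the Yamabe soliton equation $(R_g-\lambda)g=\tfrac12\mathcal{L}_vg$ is the conformal soliton equation with $\varphi=R_g-\lambda$ --- is exactly the paper's own (implicit) proof, which just invokes the remark that conformal solitons include Yamabe solitons and applies the theorem. Your accompanying sketch of Theorem~\ref{main} also tracks the paper's argument (the pointwise identity $\rho A=(\varphi-1)\,\mathrm{id}$ coming from the concurrence of $V$, umbilicity plus Codazzi on $\{\rho\neq0\}$, the conic alternative where $V=V^T$); the one step you leave open, the gluing, is precisely what the paper's Case~3 settles by showing $\det A$ must be constant on $M$, which forces the interior of $\{\rho=0\}$ to be empty and pins down a single model hypersurface.
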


\section{Preliminaries}\label{Pre} 

Let $(N,\tilde{g})$ be an $m$-dimensional pseudo-Riemannian manifold and $(M,g)$ be an $n$-dimensional submanifold in $(N,\tilde{g})$.
All manifolds in this paper are assumed to be smooth, orientable and connected.
 We denote Levi-Civita connections on $(M,g)$ and $(N,\tilde{g})$ by $\nabla$ and $\tilde{\nabla}$, respectively.
The Lie derivative of $g$ is defined by 
$$\left(\mathcal{L}_Xg\right)(Y,Z)=X(g(Y,Z))-g([X,Y],Z)-g(Y,[X,Z]),$$
for any vector fields $X,Y, Z$ on $M$.

For any vector fields $X,Y$ tangent to $M$ and $\eta$ normal to $M$, the formula of Gauss is given by
\begin{equation*}
{\widetilde{\nabla}}_XY={\nabla}_XY+h(X,Y),
\end{equation*}
where ${\nabla}_XY$ and $h(X,Y)$ are the tangential and the normal components of ${\tilde{\nabla}}_XY$.
The formula of Weingarten is given by
\begin{equation*}
{\widetilde{\nabla}}_X\eta=-A_{\eta}(X)+D_X\eta,
\end{equation*}
where $-A_{\eta}(X)$ and $D_X\eta$ are the tangential and the normal components of ${\widetilde{\nabla}}_X\eta$.
$A_{\eta}(X)$ and $h(X,Y)$ are related by
\begin{equation*}
g(A_{\eta}(X),Y)=\tilde{g}(h(X,Y),\eta).
\end{equation*}
The mean curvature vector $H$ of $M$ in $N$ is given by
\begin{equation*}
\displaystyle H(p)=\frac{1}{n}~\sum_{i=1}^n\epsilon_ih(e_i,e_i),
\end{equation*}
where $\{e_i\}_{i=1}^n$ is any orthonormal frame on $M$ at $p$, and $\epsilon_i=g(e_i,e_i).$ For any vector fields $X,Y,Z,W$ tangent to $M$, the equation of Gauss is given by
\begin{equation*}
\begin{tabular}{ll}
$\tilde{g}(\widetilde{Rm}(X,Y)Z,W)=$ & $g(Rm(X,Y)Z,W)$ \vspace{0.3pc}\\
~ & $+\tilde{g}(h(X,Z),h(Y,W))$ \vspace{0.3pc}\\
~ & $-\tilde{g}(h(X,W),h(Y,Z)),$
\end{tabular}
\end{equation*}
where $Rm$ and $\widetilde{Rm}$ are Riemannian curvature tensors of $M$ and $N$, respectively.
The equation of Codazzi is given by
\begin{equation*}
(\widetilde{Rm}(X,Y)Z)^{\perp}=({\bar{\nabla}}_Xh)(Y,Z)-({\bar{\nabla}}_Yh)(X,Z),
\end{equation*}
where $(\widetilde{Rm}(X,Y)Z)^{\perp}$ is the normal component of $\widetilde{Rm}(X,Y)Z$ and ${\bar{\nabla}}_Xh$ is defined by
\begin{equation*}
({\bar{\nabla}}_Xh)(Y,Z)=D_Xh(Y,Z)-h({\nabla}_XY,Z)-h(Y,{\nabla}_XZ).
\end{equation*}
If $N$ is a space of constant curvature, then the equation of Codazzi reduces to
\begin{equation*}
0=({\bar{\nabla}}_Xh)(Y,Z)-({\bar{\nabla}}_Yh)(X,Z).
\end{equation*}


Let $\mathbb{E}^{n+1}_s$ be an $(n+1)$-dimensional pseudo--Euclidean space with the flat metric $\tilde{g}$
given as follows
$$\tilde g=-dx_1^2-\cdots-dx_s^2+dx_{s+1}^2+\cdots+dx_{n+1}^2,$$
where $(x_1,\cdots,x_s,x_{s+1},\cdots,x_{n+1})$ denotes the usual coordinates in $\mathbb{R}^{n+1}$.
The pseudo--Riemannian space forms in $\mathbb{E}^{n+1}_s$ are defined by 
\begin{align}
\mathbb{S}^n_s(c^2)&=\{{\bf x}\in\mathbb{E}^{n+1}_s\;|\;\tilde{g}({\bf x},{\bf x})=c^{-2}\}\\
\mathbb{H}^n_{s-1}(-c^2)&=\{{\bf x}\in\mathbb{E}^{n+1}_{s}\;|\;\tilde{g}({\bf x},{\bf x})=-c^{-2}\}.
\end{align}
These spaces are complete and of constant curvature $c^2$ and $-c^2$, respectively. 
$\mathbb{S}^n_s(c^2)$ and $\mathbb{H}^n_{s-1}(-c^2)$ are called a pseudo--sphere and a pseudo--hyperbolic 
space, respectively.
\begin{rem}
\label{remshaope}
The shape operator of the pseudo--Riemannian hypersurface $M$ in the Lorentz--Minkowski space 
$\mathbb{E}^{n+1}_1$ can be expressed 
in one of the following types (cf. \cite{O'Neill83} P.261, see also \cite{Lucas11}), 
$$(1)~~
\left(
\begin{array}{cccc}
\kappa_1 & & & \text{\huge{0}}\\
 &  \kappa_2 & & \\
 & & \ddots & \\
\text{\huge{0}} & & & \kappa_n
\end{array}
\right),
\qquad
(2)~~
\left(
\begin{array}{cc|ccc}
\kappa_1 & -a & & & \text{\huge{0}}\\
 a & \kappa_1 & & &  \\ \hline
   & & \kappa_3 & &  \\
   & & & \ddots & \\
 \text{\huge{0}} & & & & \kappa_n
\end{array}
\right)
~~~a\not=0,
$$
for an orthonormal frame $\{E_1,E_2,\cdots,E_{n+1}\}$;
\begin{align*}
&\tilde g(E_1,E_1)=-1,\qquad \tilde g(E_1,E_i)=0,\quad i=2,3,\cdots,n+1,\\
&\tilde g(E_i,E_j)=\delta_{ij},\quad 2\leq i,j\leq n+1,
\end{align*}
and 
$$(3)~~
\left(
\begin{array}{cc|ccc}
\kappa_1 & 0 & & & \text{\huge{0}}\\
 1 & \kappa_1 & & &  \\ \hline
   & & \kappa_3 & &  \\
   & & & \ddots & \\
 \text{\huge{0}} & & & & \kappa_n
\end{array}
\right),
\qquad
(2)~~
\left(
\begin{array}{ccc|ccc}
\kappa_1 & 0 & 0 & & & \\
 0 & \kappa_1 & 1 & & \text{\huge{0}} & \\ 
 -1 & 0 & \kappa_1 & & & \\ \hline
  & & & \kappa_4 & &  \\
  & \text{\huge{0}} & & & \ddots & \\
  & & & & & \kappa_n
\end{array}
\right),
$$
for a pseudo-orthonormal frame $\{E_1,E_2,\cdots,E_{n+1}\}$;
\begin{align*}
&\tilde g(E_1,E_2)=-1,\qquad \tilde g(E_1,E_1)=\tilde g(E_2,E_2)=0,\\
&\tilde g(E_i,E_j)=0,\quad i=1,2,~j=3,\cdots,n+1,\\
&\tilde g(E_i,E_j)=\delta_{ij},\quad 3\leq i,j\leq n+1.
\end{align*}
\end{rem}
\section{Conformal solitons with a concurrent vector field}\label{CSwcv}


In this section, we consider a conformal soliton with a concurrent vector field.

\begin{prop}\label{AYSwithC}
Any conformal soliton $(M,g,v,\varphi)$ which has a concurrent vector field $v$ is a conformal gradient soliton with 
$\varphi =1$.
\end{prop}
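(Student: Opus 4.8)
The plan is to feed the defining relation $\nabla_X v = X$ of a concurrent vector field directly into the conformal soliton equation. First I would recall that, since $\nabla$ is the Levi--Civita connection (metric and torsion-free), a standard manipulation of the Lie-derivative formula recorded in Section~\ref{Pre} gives $(\mathcal{L}_v g)(X,Y) = g(\nabla_X v, Y) + g(X, \nabla_Y v)$ for all vector fields $X, Y$ on $M$. Substituting $\nabla_X v = X$ and $\nabla_Y v = Y$ yields $(\mathcal{L}_v g)(X,Y) = 2g(X,Y)$, i.e. $\tfrac12 \mathcal{L}_v g = g$. Comparing with $\varphi g = \tfrac12 \mathcal{L}_v g$ and using the non-degeneracy of $g$ forces $\varphi \equiv 1$.

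Next I would exhibit an explicit potential function. Set $f := \tfrac12 g(v,v)$, which is a well-defined smooth function on $M$ in the pseudo--Riemannian setting as well (no sign hypothesis on $g(v,v)$ is needed). Differentiating and using metric compatibility together with $\nabla_X v = X$ gives $X(f) = g(\nabla_X v, v) = g(X, v)$ for every $X$; since the left-hand side equals $g(\nabla f, X)$ and $g$ is non-degenerate, we conclude $\nabla f = v$. Hence $v$ is a gradient vector field, so $(M,g,v,\varphi)$ is a conformal gradient soliton. As a consistency check one verifies $\nabla\nabla f(X,Y) = g(\nabla_X v, Y) = g(X,Y) = \varphi\, g(X,Y)$, which matches the gradient form of the equation stated in the definition.

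There is essentially no deep obstacle here; the statement reduces to a short computation. The only points requiring a little care are: (i) the pseudo--Riemannian signature, so that one inverts $g$ by invoking non-degeneracy rather than positive-definiteness when passing from $g(\nabla f - v, X) = 0$ for all $X$ to $\nabla f = v$, and from $(\varphi - 1)g = 0$ to $\varphi = 1$; and (ii) using the convention "concurrent" in the strict sense $\mu \equiv 1$ fixed in the Introduction, since a general concircular field with non-constant $\mu$ would instead give $\varphi = \mu$ and would not produce $\varphi \equiv 1$.
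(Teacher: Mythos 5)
Your proposal is correct and follows essentially the same route as the paper: the same Lie-derivative computation giving $\tfrac12\mathcal{L}_v g = g$ (hence $\varphi\equiv 1$) and the same potential function $f=\tfrac12 g(v,v)$ with $\nabla f = v$. Your explicit remarks on non-degeneracy in the pseudo-Riemannian setting are a welcome clarification but do not change the argument.
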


\begin{proof}
Since $v$ is a concurrent vector field, we have
\begin{equation}\label{c.0}
g(v,X) =g(v,{\nabla}_Xv)=X(\frac{1}{2}g(v,v)),
\end{equation}
and
\begin{equation}\label{c.1}
\begin{tabular}{ll}
$\left(\mathcal{L}_vg\right)(X,Y)$ & = $vg(X,Y) - g([v,X],Y) - g(X,[v,Y])$ \vspace{0.3pc}\\
~& $= vg(X,Y) - vg(X,Y) + g({\nabla}_Xv,Y) + g(X,{\nabla}_Yv)$ \vspace{0.3pc}\\
~& $= 2g(X,Y),$
\end{tabular}
\end{equation}
for any vector fields $X$, $Y$ on $M$.
By putting $f = \frac{1}{2}g(v,v)$ on the equation $(\ref{c.0})$, we obtain $ v = \nabla f $.
Substituting $(\ref{c.1})$ into $(\ref{CS})$, we have
\begin{equation*}
\varphi = 1.
\end{equation*}
\end{proof}

\section{Conformal solitons on pseudo--Riemannian submanifolds}\label{SubCS}

In this section, we assume that $(N,\tilde{g})$ is a pseudo--Riemannian manifold endowed with a concurrent vector field $V$ and $(M,g)$ is a pseudo--Riemannian submanifold in $(N,\tilde{g})$.
$V^T$ and $V^{\perp}$ denote the tangential and the normal components of $V$, respectively.

Firstly, we show the following lemma which will be used later for the purpose of classification of the conformal solitons. 

\begin{lem}\label{NSAYS}
Any conformal soliton $(M,g,V^T,\varphi)$ on a pseudo--Riemannian submanifold $M$ in $N$ satisfies
\begin{equation}\label{ENSAYS}
(\varphi-1)g(X,Y)=g(A_{V^{\perp}}(X),Y),
\end{equation}
for any vector fields $X, Y$ on $M$.
\end{lem}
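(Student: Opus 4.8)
The plan is to decompose the concurrent vector field $V$ along $M$ and feed this decomposition into the conformal soliton equation \eqref{CSn}. Write $V=V^T+V^\perp$ where $V^T$ is tangent and $V^\perp$ is normal. For any vector field $X$ on $M$, I would apply the ambient connection: since $V$ is concurrent in $N$, $\widetilde\nabla_X V=X$. On the other hand, $\widetilde\nabla_X V=\widetilde\nabla_X V^T+\widetilde\nabla_X V^\perp$, and now I apply the Gauss formula to the first term and the Weingarten formula to the second term, obtaining
\begin{equation*}
X=\nabla_X V^T+h(X,V^T)-A_{V^\perp}(X)+D_X V^\perp.
\end{equation*}
Comparing tangential parts gives $\nabla_X V^T=X+A_{V^\perp}(X)$ (and comparing normal parts gives $D_X V^\perp=-h(X,V^T)$, which is not needed here).

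Next I would compute the Lie derivative $\mathcal L_{V^T}g$ directly from the definition quoted in the Preliminaries. As in the computation \eqref{c.1} in Proposition~\ref{AYSwithC}, for tangent vector fields $X,Y$ one has
\begin{equation*}
\left(\mathcal L_{V^T}g\right)(X,Y)=g(\nabla_X V^T,Y)+g(\nabla_Y V^T,X).
\end{equation*}
Substituting the formula $\nabla_X V^T=X+A_{V^\perp}(X)$ just obtained, and using that the shape operator $A_{V^\perp}$ is $g$-self-adjoint (which follows from $g(A_\eta X,Y)=\tilde g(h(X,Y),\eta)=g(A_\eta Y,X)$), this becomes
\begin{equation*}
\left(\mathcal L_{V^T}g\right)(X,Y)=2g(X,Y)+2g(A_{V^\perp}(X),Y).
\end{equation*}
Plugging this into the conformal soliton equation $\varphi g=\tfrac12\mathcal L_{V^T}g$ yields $\varphi g(X,Y)=g(X,Y)+g(A_{V^\perp}(X),Y)$, which rearranges to \eqref{ENSAYS}.

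There is essentially no serious obstacle here: the argument is a routine bookkeeping of tangential and normal components, and the only points requiring a little care are the correct signs in the Weingarten formula (so that the $A_{V^\perp}$ term enters with the sign making \eqref{ENSAYS} come out as stated) and the self-adjointness of $A_{V^\perp}$ used to symmetrize the Lie derivative expression. If anything, the mild subtlety is that in the pseudo-Riemannian setting one must make sure the Gauss and Weingarten decompositions are still orthogonal direct-sum decompositions of $T_pN$, which holds because $M$ is assumed to be a (nondegenerate) pseudo-Riemannian submanifold, so $T_pN=T_pM\oplus (T_pM)^\perp$; this is exactly what lets me read off the tangential components unambiguously. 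I would present the proof in the order: (1) derive $\nabla_X V^T=X+A_{V^\perp}(X)$ from concurrency plus Gauss--Weingarten; (2) expand $\mathcal L_{V^T}g$ and symmetrize; (3) substitute into \eqref{CSn} and conclude.
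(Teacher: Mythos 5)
Your proof is correct and follows essentially the same route as the paper: both compute $\mathcal L_{V^T}g(X,Y)=g(\nabla_X V^T,Y)+g(X,\nabla_Y V^T)$, replace $\nabla_X V^T$ by the tangential part of $\widetilde\nabla_X V-\widetilde\nabla_X V^\perp$ using concurrency and the Weingarten formula, and substitute into \eqref{CSn}. The only cosmetic difference is that you first isolate the identity $\nabla_X V^T=X+A_{V^\perp}(X)$ before symmetrizing, whereas the paper carries out the substitution directly inside the inner products.
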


\begin{proof}
From the definition of the Lie derivative, we have
\begin{equation}\label{s.3}
\begin{tabular}{ll}
$(\mathcal{L}_{V^T}g)(X,Y)$ & = $V^Tg(X,Y) - g(\nabla_{V^T}X-\nabla_X{V^T},Y) - g(X,\nabla_{V^T}Y-\nabla_Y{V^T})$\vspace{0.5pc}\\
~ & $=g(\nabla_X{V^T},Y) + g(X,\nabla_Y{V^T})$\vspace{0.5pc}\\
~ & $=\tilde g(\tilde{\nabla}_X{V}-\tilde{\nabla}_X{V^{\perp}},Y) + \tilde g(X,\tilde{\nabla}_Y{V}-\tilde{\nabla}_Y{V^{\perp}})$\vspace{0.5pc}\\
~ & $=2g(X,Y)+2g(A_{V^{\perp}}(X),Y)$,
\end{tabular}
\end{equation}
for any vector fields $X, Y$ on $M$.
Combining $(\ref{s.3})$ with $(\ref{CSn})$, we obtain $(\ref{ENSAYS})$.
\end{proof}

\begin{prop}\label{CS is GCS}
Any conformal soliton $(M,g,V^T,\varphi)$ on a pseudo--Riemannian submanifold $M$ in $N$ 
is a conformal gradient soliton. 
\end{prop}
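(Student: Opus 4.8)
The plan is to produce an explicit potential function on $M$, mimicking the computation $(\ref{c.0})$ from Proposition~\ref{AYSwithC}. Since $(N,\tilde g)$ carries the concurrent vector field $V$, meaning $\tilde\nabla_X V = X$ for every $X$ tangent to $N$, the natural candidate is the smooth function $f := \frac{1}{2}\,\tilde g(V,V)\big|_M$ (this is smooth on $M$ because the splitting $V = V^T + V^{\perp}$ is smooth).

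First I would check that $V^T = \nabla f$. For any vector field $X$ tangent to $M$, differentiating $f$ along $X$ and using that $\tilde g$ is parallel and $V$ is concurrent gives
\[
X(f) = \tilde g(\tilde\nabla_X V, V) = \tilde g(X, V) = \tilde g(X, V^T) = g(X, V^T),
\]
where the third equality holds because $X$ is tangent to $M$, so the normal part $V^{\perp}$ contributes nothing, and the last equality is just the restriction of $\tilde g$ to $TM$. Since $X$ is arbitrary, this says precisely $\nabla f = V^T$.

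Next, with $V^T = \nabla f$ in hand, the conformal soliton equation $(\ref{CSn})$ turns into a Hessian equation. For all vector fields $X,Y$ on $M$,
\[
\frac{1}{2}\big(\mathcal{L}_{\nabla f} g\big)(X,Y) = \frac{1}{2}\big(g(\nabla_X \nabla f, Y) + g(X, \nabla_Y \nabla f)\big) = g(\nabla_X \nabla f, Y) = (\nabla\nabla f)(X,Y),
\]
where I used the symmetry of the Hessian, which only requires $\nabla$ to be torsion-free and metric and hence remains valid for the indefinite metric $g$. Combining this with $\varphi g = \frac{1}{2}\,\mathcal{L}_{V^T} g$ yields $\varphi g = \nabla\nabla f$, so $(M,g,f)$ is a conformal gradient soliton, as claimed.

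I do not expect a genuine obstacle here: the statement is essentially a corollary of the concurrency of $V$. The only two points that need (trivial) care are the smoothness of $f$ and the fact that the identity relating $\mathcal{L}_{\nabla f} g$ to the Hessian carries over verbatim to the pseudo--Riemannian setting. One could alternatively extract $V^T = \nabla f$ from the chain of equalities $(\ref{s.3})$ inside the proof of Lemma~\ref{NSAYS} together with $(\ref{ENSAYS})$ itself, but the direct computation above seems the cleanest route.
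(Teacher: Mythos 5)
Your proposal is correct and follows essentially the same route as the paper: both define $f=\tfrac12\,\tilde g(V,V)$ on $M$ and use the concurrency $\tilde\nabla_XV=X$ to show $X(f)=\tilde g(X,V)=g(X,V^T)$, hence $V^T=\nabla f$. The only difference is that you additionally spell out the resulting Hessian identity $\varphi g=\nabla\nabla f$, which the paper leaves implicit.
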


\begin{proof}
Set
\begin{equation*}
\displaystyle f=\frac{1}{2}~\tilde{g}(V,V).
\end{equation*}
For any vector field $X$ on $M$, we obtain
$$g(V^T,X)=\tilde g(V,X)=\tilde g (V,\tilde \nabla_XV)=X(\frac{1}{2}\tilde g(V,V))=Xf=g(\nabla f, X).$$
\end{proof}

\begin{prop}\label{MCS}
If a conformal soliton $(M,g,V^T,\varphi)$ on a pseudo--Riemannian submanifold $M$ in $N$ is minimal, 
then $\varphi=1$.
\end{prop}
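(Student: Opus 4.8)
The plan is to deduce Proposition~\ref{MCS} directly from Lemma~\ref{NSAYS} by taking a trace. By Lemma~\ref{NSAYS}, the conformal soliton $(M,g,V^T,\varphi)$ satisfies
\begin{equation*}
(\varphi-1)g(X,Y)=g(A_{V^{\perp}}(X),Y)
\end{equation*}
for all vector fields $X,Y$ on $M$. Fix a point $p\in M$ and choose an orthonormal frame $\{e_i\}_{i=1}^n$ near $p$ with $\epsilon_i=g(e_i,e_i)=\pm1$. Setting $X=Y=e_i$, multiplying by $\epsilon_i$, and summing over $i$ gives, on the left-hand side, $\sum_{i=1}^n\epsilon_i(\varphi-1)g(e_i,e_i)=(\varphi-1)\sum_{i=1}^n\epsilon_i^2=n(\varphi-1)$.

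For the right-hand side I would use the relation $g(A_{V^{\perp}}(X),Y)=\tilde g(h(X,Y),V^{\perp})$ recorded in Section~\ref{Pre}, so that $\sum_{i=1}^n\epsilon_i\,g(A_{V^{\perp}}(e_i),e_i)=\tilde g\bigl(\sum_{i=1}^n\epsilon_i h(e_i,e_i),V^{\perp}\bigr)=n\,\tilde g(H,V^{\perp})$, by the definition of the mean curvature vector $H$. Since $M$ is assumed minimal, $H=0$, hence the right-hand side vanishes. Combining the two computations yields $n(\varphi-1)=0$ at every point $p$, and as $n\geq1$ this forces $\varphi\equiv1$ on $M$.

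There is essentially no serious obstacle here; the only points requiring a little care are bookkeeping of the signs $\epsilon_i$ in the pseudo--Riemannian trace (which, being squared, simply contribute $+1$ each, so the metric signature $s$ plays no role), and the observation that although $\varphi$ is a priori only a smooth function, the traced identity pins it down pointwise to the constant value $1$. One could also phrase the same argument invariantly by noting that $\operatorname{tr}_g A_{V^{\perp}}=n\,\tilde g(H,V^{\perp})$ and $\operatorname{tr}_g g=n$, so that tracing \eqref{ENSAYS} reads $n(\varphi-1)=n\,\tilde g(H,V^{\perp})=0$.
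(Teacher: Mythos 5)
Your proof is correct and follows essentially the same route as the paper: trace the identity of Lemma~\ref{NSAYS} against an orthonormal frame (with the signs $\epsilon_i$ handled properly) to get $n(\varphi-1)=n\,\tilde g(H,V^{\perp})$, and invoke minimality. Your write-up is in fact slightly more careful than the paper's about the $\epsilon_i$ bookkeeping in the pseudo--Riemannian trace.
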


\begin{proof}
Let $\{e_1,  \cdots , e_n\}$ be an orthonormal frame on $M$. By Lemma~$\ref{NSAYS}$, we have
\begin{equation*}
(\varphi-1)g_{ij}=g(A_{V^{\perp}}(e_i),e_j)=\tilde{g}(h(e_i,e_j),V^{\perp}).
\end{equation*}
Since $M$ is minimal and taking the trace, we obtain
\begin{equation*}
n(\varphi-1)=n\tilde g(H,V^{\perp})=0. 
\end{equation*}
Therefore, we conclude that
\begin{equation*}
\varphi=1.
\end{equation*}
\end{proof}

\section{Proof of Theorem~$\ref{main}$}\label{Classification}

We hereafter denote $V$ by the position vector field of a pseudo--Riemannian hypersurface  
in a pseudo-Euclidean space $\mathbb{E}^{n+1}_s$.
In this section, we give the proof of Theorem~$\ref{main}$.


To show Theorem \ref{main}, we first consider the case that $V=V^T.$  B. Y. Chen and Y. M. Oh \cite{CO17} showed the following proposition.

\begin{prop}[\cite{CO17}]\label{Chenlem}

Let $M$ be a pseudo--Riemannian submanifold in a pseudo-Euclidean space $\mathbb{E}^m_s$. If the position vector field $V$ of $M$ in $\mathbb{E}^m_s$ is either space-like or time-like, then $V=V^T$ holds identically if and only if $M$ is a conic submanifolds with the vertex at the origin.
\end{prop}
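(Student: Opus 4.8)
The plan is to use the one-parameter group of dilations generated by the position vector field, together with the elementary fact that a vector field is tangent to a submanifold if and only if its flow leaves that submanifold invariant. On $\mathbb{E}^m_s$ the position vector field is $V=\sum_i x_i\,\partial_{x_i}$, whose integral curve through a point $x$ is $t\mapsto e^t x$; hence the flow of $V$ is the dilation $\phi_t(x)=e^t x$. Since $V$ is space-like or time-like along $M$ we have $\tilde g(V,V)\neq 0$, so $V\neq 0$ on $M$ and $0\notin M$; this is what lets the dilations act without fixed points on $M$ and gives ``vertex at the origin'' its meaning. Throughout, the orthogonal splitting $V=V^T+V^{\perp}$ is legitimate because $T_pM$ is non-degenerate for a pseudo-Riemannian submanifold.

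First I would treat the easy implication. If $M$ is a conic submanifold with vertex at the origin, then for each $p\in M$ the ray $\{tp : t>0\}$ lies in $M$, so $t\mapsto tp$ is a curve in $M$ whose velocity at $t=1$ equals $p=V_p$. Thus $V_p\in T_pM$ for every $p\in M$, i.e.\ $V^{\perp}\equiv 0$ and $V=V^T$.

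For the converse, assume $V=V^T$, so $V$ is everywhere tangent to $M$. Then the integral curve $t\mapsto e^t p$ of $V$ starting at any $p\in M$ remains in $M$, so $M$ is invariant under the dilations $\phi_t$ and is therefore a union of dilation orbits, i.e.\ a conic submanifold with vertex at the origin. (Infinitesimally the same conclusion drops out of the Gauss formula: decomposing the concurrency relation $\widetilde{\nabla}_XV=X$ as $\widetilde{\nabla}_XV^T=\nabla_XV^T+h(X,V^T)$ and using $V=V^T$ gives $\nabla_XV=X$ and $h(X,V)=0$, and $\nabla_XV=X$ forces the integral curves of $V$ in $M$ to be the dilation rays through the origin.)

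The one genuinely delicate point is the local-versus-global reading: tangency of $V$ only guarantees that an integral curve stays in $M$ while it remains in the maximal domain, so strictly speaking $M$ is an open portion of a cone, and ``$M$ is conic'' should be understood in that sense (or after replacing $M$ by the cone it generates). Everything else is routine once the identification of the flow of $V$ with the dilations is in hand.
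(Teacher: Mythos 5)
Your argument is correct. Note that the paper does not actually prove this proposition -- it is quoted from Chen--Oh [CO17] -- and the closest in-paper argument is the proof of the relaxed version, Proposition~\ref{conic}. There the authors normalize the radial field as $e=\frac{1}{k}V$ with $k=\sqrt{\sum x_i^2}$ and show $\tilde{\nabla}_ee=0$, so that the integral curves of $e$ are ambient geodesics, i.e.\ straight lines through the origin (this is also the shape of Chen--Oh's original proof, which normalizes by $|\tilde g(V,V)|^{1/2}$ and hence genuinely needs the space-like/time-like hypothesis). You instead identify the flow of $V$ with the dilations $\phi_t(x)=e^tx$ and invoke uniqueness of integral curves to conclude that $M$ is dilation-invariant; this is the same geometric content (the tangential radial field integrates to rays from the origin) reached by ODE uniqueness rather than by the geodesic equation. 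A pleasant by-product of your route is that the causal-character hypothesis is used only to guarantee $0\notin M$, so your argument in fact already yields the strengthened Proposition~\ref{conic} away from the origin, whereas the quoted statement as such does not. Your caveat about the local reading of ``conic'' (open portions of rays, hence an open portion of a cone) is the right one and matches how the conclusion is used later in the proof of Theorem~\ref{main}.
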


We can relax the assumption, and get the following.

\begin{prop}\label{conic}
Let $M$ be a pseudo-Riemannian submanifold in a pseudo-Euclidean space $\mathbb{E}^m_s$.
Then, $V=V^T$ holds identically if and only if $M$ is a conic submanifolds with the vertex at the origin.
\end{prop}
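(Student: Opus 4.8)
The plan is to remove the space-like/time-like hypothesis from Proposition~\ref{Chenlem} by working directly with the Gauss formula rather than with a radial parametrization of $M$. Recall that for the position vector field $V$ on $\mathbb{E}^m_s$ one has $\tilde\nabla_X V = X$ for every tangent field $X$; this is the only property of $V$ we shall use, so no sign assumption on $\tilde g(V,V)$ is needed. Decompose $V = V^T + V^\perp$. Applying the Gauss and Weingarten formulas to $\tilde\nabla_X V = X$ for $X$ tangent to $M$ and separating tangential and normal parts yields the two identities
\begin{equation*}
\nabla_X V^T - A_{V^\perp}(X) = X, \qquad h(X, V^T) + D_X V^\perp = 0,
\end{equation*}
valid on all of $M$ without any nondegeneracy assumption on $V$ itself.

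The ``if'' direction is immediate: if $M$ is a cone with vertex at the origin, then at every point $p \in M$ the position vector $\mathbf{x}(p)$ lies along the ruling line through $p$, which is contained in $T_pM$, so $V = V^T$ identically. For the ``only if'' direction, assume $V = V^T$, i.e.\ $V^\perp \equiv 0$. Then the first identity above collapses to $\nabla_X V = X$ for all $X \in TM$, so $V$ is a concurrent vector field on $(M,g)$ in its own right. The next step is to show that the integral curves of $V$ through points of $M$ are straight lines through the origin lying on $M$. Since $\tilde\nabla_V V = V$, the curve $\g(t)$ with $\g(0)=p$, $\g'(0) = V(p)$ satisfies, after reparametrizing, a linear ODE whose solution is $\g(t) = e^{t}\,\mathbf{x}(p)$; because $V$ is tangent to $M$ this curve stays in $M$, and it clearly sweeps out the open ray $\{\lambda\,\mathbf{x}(p) : \lambda > 0\}$. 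Running this argument through every point of a submanifold transverse to $V$ exhibits $M$ as a union of such rays, i.e.\ as a conic submanifold with vertex at the origin. (One should note the harmless degenerate case $\mathbf{x}(p) = 0$, where the ray is just the point; since $M$ is assumed to avoid the origin, or one simply excises it, this causes no trouble.)

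The main obstacle, and the place where care is required beyond Chen--Oh's argument, is exactly the point where they used the signature hypothesis: one wants to build a local parametrization of $M$ as $(r, u^1,\dots,u^{n-1}) \mapsto r\,\mathbf{x}(u)$ where $u \mapsto \mathbf{x}(u)$ parametrizes a transverse slice, and one must check that this is an honest immersion and that $V = V^T$ forces $M$ to have this form locally. When $V$ is neither space-like nor time-like at $p$ — i.e.\ $\tilde g(V,V)(p) = 0$ — the radial direction is a null direction of the induced metric, so one cannot simply normalize $V$ to unit length; instead I would argue purely infinitesimally, using that $V$ is a nowhere-zero tangent field (nonzero because $\mathbf{x}(p)\neq 0$ and $V^\perp = 0$) whose flow is the radial scaling, and that the quotient of $M$ by this $\mathbb{R}_{>0}$-action is a smooth $(n-1)$-manifold which, embedded back at level $r=1$, provides the link of the cone. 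The identity $\nabla_X V = X$ guarantees that the flow acts by genuine dilations on $M$, which is what ``conic with vertex at the origin'' means, and this holds regardless of whether individual tangent vectors are space-like, time-like, or null.
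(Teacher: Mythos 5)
Your proof is correct and rests on the same key idea as the paper's: showing that the radial tangent field $V=V^T$ integrates to straight rays through the origin lying in $M$, without ever dividing by the possibly vanishing pseudo-norm $\sqrt{|\tilde g(V,V)|}$. The only (cosmetic) difference is that the paper normalizes $V$ by the Euclidean length $k=\sqrt{\sum x_i^2}$ and checks $\widetilde{\nabla}_e e=0$ for $e=V/k$, whereas you integrate $V$ directly to get the exponential parametrization $\gamma(t)=e^{t}\mathbf{x}(p)$ of the same rays; both yield the conic structure identically.
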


\begin{proof}

For the position vector field $V=\displaystyle \sum_{i=1}^mx_i\frac{\partial}{\partial x_i}$, let $k=\sqrt{\displaystyle \sum_{i=1}^{m}x_i^2 }$. If $ V=V^T$ on $M$, $e=\frac{1}{k}V$ is a vector field tangent to $M$. Then we get
$$e=\tilde{\nabla}_eV=\tilde{\nabla}_e(ke)=e(k)e+k\tilde{\nabla}_ee,$$
and
$$e(k)=(\displaystyle \sum_{i=1}^{m}\frac{x_i}{k}\frac{\partial}{\partial x_i})(k)=\displaystyle \sum_{i=1}^{m}(\frac{x_i}{k}\cdot \frac{1}{2}\cdot \frac{2x_i}{k})=1.$$
So we conclude $\tilde{\nabla}_ee=0$. 
Therefore, by the same argument as in the proof of Proposition \ref{Chenlem}, one can get the proposition.
\end{proof}

\begin{proof}[Proof of Theorem $\ref{main}$]
Let $\alpha$ be a mean curvature and $\lambda$ be a support function of $M$,  i.e., $H=\alpha N$ and $\lambda =\tilde{g}(N,V)$ with a unit normal vector field $N$. Set $\epsilon_N=\tilde{g}(N,N)=\pm 1$ and $U_0=\{x\in M| \lambda=0\}$.
 
Let $f$ be a smooth function on $M$ defined by 
\begin{equation*}
f(x)\coloneqq \det A(x) \hspace{0.2in} x\in M.
\end{equation*}  
 
\noindent
\underline{Case 1. $U_0 = \emptyset$}:
If $V^\perp\not=0$, 
Lemma~$\ref{NSAYS}$ implies that the pseudo--Riemannian hypersurface $M$ 
must have a diagonalizable shape operator with respect to orthonormal frame $e_i$ for $i=1,\cdots ,n$ .
From Lemma~$\ref{NSAYS}$, we have
\begin{equation*}
(\varphi-1)g_{ij}=\tilde{g}(h(e_i,e_j),V^{\perp})=\tilde{g}(\epsilon_N{\kappa}_i g_{ij} N,V)=\epsilon_N{\kappa}_i g_{ij} \lambda ,
\end{equation*}
where $A_N(e_i)={\kappa}_ie_i, ~ (i=1,\cdots ,n)$. 
Hence we have
\begin{equation}\label{e.1}
\varphi-1=\epsilon_N\lambda {\kappa}_i.
\end{equation}
By taking the summation, we obtain
\begin{equation}\label{e.2}
\varphi-1=\lambda \alpha.
\end{equation} 
Comparing ~$(\ref{e.1})$ and ~$(\ref{e.2})$, we have
\begin{equation*}
{\kappa}_i =\epsilon_N\alpha.
\end{equation*}
Thus $M$ is totally umbilical with $A_N(e_i)=\epsilon_N\alpha e_i$ and $h$ satisfies $h(X,Y)=\alpha g(X,Y) N$.
Since $N$ is a unit normal vector field, we have
\begin{equation*}
0={\widetilde{\nabla}}_X(\tilde{g}(N,N))=2\tilde{g}({\widetilde{\nabla}}_XN,N)=2\tilde{g}(D_XN,N).
\end{equation*}
Therefore, $D_XN=0$.
Hence we obtain
\begin{equation*}
\begin{tabular}{rl}
$({\bar{\nabla}}_Xh)(Y,Z)=$ & $D_Xh(Y,Z)-h({\nabla}_XY,Z)-h(Y,{\nabla}_XZ)$ \vspace{0.5pc} \\
~$=$ & $X(\alpha ) g(Y,Z) N,$
\end{tabular}
\end{equation*}
for any vector fields $X, Y, Z$ on $M$.
From the equation of Codazzi, we have
\begin{equation*}
X(\alpha)Y=Y(\alpha)X.
\end{equation*}
Since we can assume that $X$ and $Y$ are linearly independent, we conclude that $\alpha$ and $f$ are constant respectively.

If $\alpha=0$, then by ${\widetilde{\nabla}}_XN=0,$ $N$, restricted to $M$, is a constant vector field in $\mathbb{E}^{n+1}_s$
and we have
\begin{equation*}
{\widetilde{\nabla}}_X(\tilde{g}(V,N))=\tilde{g}({\widetilde{\nabla}}_XV,N)+\tilde{g}(V,{\widetilde{\nabla}}_XN)=\tilde{g}(X,N)=0.
\end{equation*}
This shows that $\lambda=\tilde{g}(V,N)$ is a nonzero constant when $V$ and $N$ are restricted to $M$. 
Therefore, $M$ is contained in a hyperplane normal to $N$ which does not through the origin and $\varphi=1$.

If $\alpha\not=0$, then we have
\begin{equation*}
{\widetilde{\nabla}}_X(V+\epsilon_N{\alpha}^{-1}N)=X+\epsilon_N{\alpha}^{-1} {\tilde{\nabla}}_XN=X+\epsilon_N{\alpha}^{-1} (-A_N(X))=0.
\end{equation*}
This shows that the vector field $V+\epsilon_N{\alpha}^{-1}N$, restricted to $M$, is a constant one in $\mathbb{E}^{n+1}_s.$ 
Therefore, $M$ is contained in a pseudo hyperbolic space $\mathbb{H}^{n}_{s-1}$ or a pseudo sphere 
$\mathbb{S}^{n}_{s}$ of $\mathbb{E}^{n+1}_s$.

\noindent
\underline{Case 2. $U_0=M$}:

We have $V=V^T$. 
By Proposition~\ref{conic}, we obtain that $M$ is contained in a conic hypersurface.

\noindent
\underline{Case 3. Others}:

Take $p\in M\backslash U_0$, that is, $\lambda\not=0$ on some open set $\Omega\ni p$.
By the same argument as in Case 1, we have $\Omega$ is an open portion of a hyperplane, a pseudo hyperbolic space $\mathbb{H}^{n}_{s-1}$ or a pseudo sphere 
$\mathbb{S}^{n}_{s}$ of $\mathbb{E}^{n+1}_s$.

We consider the case that $\Omega$ is an open portion of a hyperplane.
Without loss of generality, we can take $\Omega$ as the maximum connected component which is an open set including $p$ on $M\backslash U_0$.
On $\Omega$, $\lambda=\tilde g(V,N)(\not=0)$ is constant, say $\lambda_\Omega$.
Since $M$ is connected, if $\Omega$ is closed, then $\Omega=M$, which is a contradiction.
If $M$ is not closed, then we have
$\partial \Omega\not=\emptyset$ and $\partial \Omega\cap\Omega=\emptyset.$
Take $q\in \partial \Omega$. Since $\lambda$ is continuous, we have $\lambda(q)=\lambda_\Omega$. Thus we can take an open neighborhood $U_q$ of $q$ such that $\lambda\not=0$ on $U_q$. Since $\Omega$ is the maximum connected component, we have a contradiction.
Hence, we have that $\Omega$ is an open portion of a pseudo hyperbolic space $\mathbb{H}^{n}_{s-1}$ or a pseudo sphere 
$\mathbb{S}^{n}_{s}$ of $\mathbb{E}^{n+1}_s$. 

Assume that Int$U_0 \neq \emptyset$.
On $U_0$, we have $V=V^T$. Since $V$ is a concurrent vector field,
\begin{equation*}
X =\widetilde{\nabla}_XV^T = \nabla_XV^T+h(X,V^T),
\end{equation*}
on Int$U_0$. So we obtain
$h(X,V^T)=0.$
Therefore, we have $A_N(V^T)=0$. Thus, the shape operator has zero eigenvalue. Hence, $f=\det A=0$ on Int$U_0$.
From this, $X(f) = 0$ on Int$U_0$ for any vector field $X$ on $M$. From Case 1, $X(f) = 0$ on $M\backslash U_0$. This means $X(f) = 0$ on $M$. So we conclude that $f$ is constant on $M$. But $f\neq 0$ on $M\backslash U_0$, this is a contradiction. Therefore, Int$U_0 = \emptyset$.

By the same argument as in Case 1, $\kappa_i = \kappa_j$ and $X(\kappa_i) = 0$ on $M\backslash U_0$ for any $1\leq i, j\leq n$ and  any vector field $X$. From this and Int$U_0 = \emptyset$, $\kappa_i$ is constant which doesn't depend on $i$ on $M$. 

Therefore, $M$ is contained in a pseudo hyperbolic space $\mathbb{H}^{n}_{s-1}$ or a pseudo sphere 
$\mathbb{S}^{n}_{s}$ of $\mathbb{E}^{n+1}_s$.
\end{proof}



\bibliographystyle{amsbook}

\end{document}